\newtheorem{theorem}{Theorem}[section]
\newtheorem{proposition}[theorem]{Proposition}
\newtheorem{lemma}[theorem]{Lemma}
\newtheorem{corollary}[theorem]{Corollary}
\theoremstyle{definition}
\newtheorem{definition}[theorem]{Definition}
\newtheorem{remark}[theorem]{Remark}
\numberwithin{equation}{section}
\newcommand{\C}{{\mathbb{C}}}
\newcommand{\T}{{\mathbb{T}}}
\newcommand{\Pn}{{\mathbb{P}^{n-1}}}
\newcommand{\G}{{\mathfrak{G}}}
\begin{document}
	
\title[Vector bundles over LVMB manifolds]{Stability and holomorphic connections on vector 
bundles over LVMB manifolds}

\author[I. Biswas]{Indranil Biswas}

\address{School of Mathematics, Tata Institute of Fundamental
Research, Homi Bhabha Road, Mumbai 400005, India}

\email{indranil@math.tifr.res.in}

\author[S. Dumitrescu]{Sorin Dumitrescu}

\address{Universit\'e C\^ote d'Azur, CNRS, LJAD}

\email{dumitres@unice.fr}

\author[L. Meersseman]{Laurent Meersseman}

\address{Laboratoire Angevin de Recherche en Math\'ematiques, Universit\'{e} d'Angers, Universit\'e de Bretagne-Loire F-49045 Angers Cedex, France}

\email{laurent.meersseman@univ-angers.fr}

\subjclass[2010]{32Q26, 32M12, 53B05}

\keywords{LVMB manifold, stability, holomorphic connection, almost homogeneous manifold.}

\date{}

\begin{abstract}
We characterize all LVMB manifolds $X$ such that the holomorphic tangent bundle 
$TX$ is spanned at the generic point by a family of global holomorphic vector fields, each of 
them having non-empty zero locus. We deduce that holomorphic connections on semi-stable 
holomorphic vector bundles over LVMB manifolds with this previous property are always flat.
\bigskip

\noindent
{\it R\'esum\'e.}\,\, {\bf Fibr\'es vectoriels holomorphes sur les vari\'et\'es LVMB.}\,\,
Nous caract\'erisons les vari\'et\'es LVMB qui ont la propri\'et\'e de positivit\'e $\mathcal P$   suivante :  le fibr\'e tangent holomorphe est engendr\'e au point g\'en\'erique par une famille de champs de vecteurs holomorphes (globalement d\'efinis)   $\{ v_i \}$, tel que chaque $v_i$ s'annule en au moins un point de $X$.
Nous en d\'eduisons que, sur les  vari\'et\'es LVMB avec la propri\'et\'e $\mathcal P$, les connexions holomorphes sur les fibr\'es vectoriels holomorphes semi-stables sont n\'ecessairement plates.
\end{abstract}

\maketitle

\section*{\bf{Version fran\c caise abr\'eg\'ee}}

Les vari\'et\'es LVMB \cite{meerssemanthesis, meersseman2000, Bosio, 
BosioActa} (voir aussi  \cite{PUV}) forment une classe de vari\'et\'es complexes compactes (en g\'en\'eral, non--K\"ahler)
qui g\'en\'eralise les vari\'et\'es toriques et aussi les vari\'et\'es de Hopf diagonales.  Les  vari\'et\'es LVMB sont des compactfications \'equivariantes lisses de groupes de Lie complexes ab\'eliens.

Cet article \'etudie la g\'eom\'etrie complexe des fibr\'es holomorphes au-dessus d'une vari\'et\'e LVMB.

Rappelons qu'une vari\'et\'e  LVMB  $X$ est biholomorphe \`a un  quotient d'un ouvert $V$ de l'espace projectif complexe $\Pn$ par une action holomorphe,  libre et propre du groupe de Lie complexe   $\C^m$, engendr\'e par  $m$ (avec $n>2m$)  champs de vecteurs lin\'eaires diagonaux  qui commutent.

L'ouvert  $V$ est le compl\'ementaire dans  $\Pn$ d'une union $E$ de   $k$ hyperplanes  de coordonn\'ees. Par cons\'equent, l'ouvert $V$ contient le tore  $\T\,:=\,(\C^*)^{n-1}$ et est  invariant par l'action naturelle de  $\T$ 
sur  $\Pn$. Le quotient de $\T$ par l'action de  $\C^m$ s'identifie \`a un groupe de Lie complexe ab\'elien $G$. Cela implique que $X$ est une compactification \'equivariante lisse (de dimension $\nu=n-m-1$) de $G$ \cite{meerssemanthesis, meersseman2000, Bosio, 
BosioActa}.

Le premier r\'esultat principal de l'article (Th\'eor\`eme \ref{thLVMB}) est une caract\'erisation de vari\'et\'es LVMB $X$ qui ont la propri\'et\'e de positivit\'e $\mathcal P$ suivante : le fibr\'e tangent holomorphe $TX$ est engendr\'e au point g\'en\'erique par une famille de champs de vecteurs holomorphes (globalement d\'efinis)   $(v_1,\hdots,v_{n-m-1})$, tel que chaque $v_i$ s'annule en au moins un point de $X$.
Le th\'eor\`eme \ref{thLVMB} affirme qu'une vari\'et\'e LVMB a la propri\'et\'e $\mathcal P$ si et seulement si $k \leq m+1$.

Le cas $k=m+1$  est particuli\`erement int\'eressant car les vari\'et\'es LVMB correspondantes poss\`edent des structures affines complexes (notez que la preuve du Lemme 12.1 faite dans \cite{BosioActa}
pour les vari\'et\'es LVM se g\'en\'eralise sans difficult\'e au cadre LVMB). Les vari\'et\'es LVMB  les plus simples avec $k=m+1$ sont les vari\'et\'es de Hopf diagonales, mais il y a \'egalement des exemples  dont la topologie peut-\^etre  compliqu\'ee (vari\'et\'es diff\'eomorphes \`a un produit entre $(\mathbb S^1)^m$ et une somme connexe de produits de sph\`eres, ainsi que des vari\'et\'es avec torsion arbitraire dans le sens du Th\'eor\`eme  14.1 et du Corollaire  14.3 dans \cite{BosioActa}). 

Dans la deuxi\`eme partie  de l'article nous \'etudions les fibr\'es holomorphes $E$ munis de connexions holomorphes au-dessus des vari\'et\'es LVMB qui ont la propri\'et\'e $\mathcal P$. Plus pr\'ecis\'ement, nous faisons des liens avec la notion  de stabilit\'e au sens des pentes. Mentionnons que les notions de degr\'e, pente et (semi)-stabilit\'e  sont consid\'er\'ees ici par rapport \`a une m\'etrique de Gauduchon. En particulier, le degr\'e n'est pas, en g\'en\'eral, un invariant topologique (comme c'est le cas dans le contexte K\"ahler). 

Nous montrons que chaque sous-faisceau $E_i$ dans la filtration de Harder-Narasimhan 
de $E$

$$
0\,=\, E_0\, \subset\, E_1 \,\subsetneq\, E_2\, \subsetneq\, \cdots \, \subsetneq\, E_b \,
\subsetneq\, E_{b+1}\,=\, E
$$

 est invariant par la connexion holomorphe et est, par cons\'equent, localement libre. Nous utilisons ensuite la propri\'et\'e de positivit\'e $\mathcal P$ de $TX$ pour en d\'eduire que  la connexion holomorphe induite sur chaque
quotient semi-simple $E_i/E_{i-1}$    est n\'ecessairement plate. Par cons\'equent, nous obtenons le Corollaire \ref{cor2}  qui affirme que toute connexion holomorphe sur un fibr\'e semi-stable $E$ au-dessus d'une vari\'et\'e LVMB avec la propri\'et\'e $\mathcal P$ est n\'ecessairement  plate.

\section{Introduction}

An important class of compact complex (in general, non--K\"ahler) manifolds generalizing toric 
varieties and diagonal Hopf manifolds are the so-called LVMB manifolds \cite{meerssemanthesis, meersseman2000, Bosio, 
BosioActa} (see also \cite{PUV}). They are smooth equivariant compactifications of complex 
abelian Lie groups.

This article deals with the geometry of holomorphic vector bundles over LVMB manifolds.

Our first result (Theorem \ref{thLVMB}) is a geometric characterization of LVMB manifolds $X$ 
admitting a family of global holomorphic vector fields spanning the holomorphic tangent bundle 
$TX$ at the generic point and such that each vector field in the family has a non-empty zero 
locus.

We use this positivity result and stability arguments to show that holomorphic connections in 
semi-stable holomorphic vector bundles over those LVMB manifolds are always flat (see 
Corollary \ref{cor2}).

\section{Geometry of the tangent bundle of LVMB manifolds}

We refer to \cite{meersseman2000} and \cite{Bosio} for more details on the properties of LVMB manifolds 
used below. It should be clarified that although the article \cite{meersseman2000} deals with LVM 
manifolds, most results there hold verbatim, and with the same proof, for the more general LVMB case.

A LVMB manifold $X$ is a a quotient of some open and dense subset $V$ of $\Pn$ by a free and 
proper holomorphic $\C^m$-action generated by $m$ commuting linear diagonal vector fields 
$\xi_1,\hdots,\xi_m$ (with $n>2m$). More precisely, the action is given by
\begin{equation}
\label{completeaction}
T\cdot [z]:=\left [ 
z_ie^{\langle \Lambda_i,T\rangle}
\right ]_{i=1} ^n
\end{equation}
where the $m$ rows of the matrix $(\Lambda_1,\hdots,\Lambda_n)$ are the coefficients of the $m$ vector fields 
$\xi_1,\hdots,\xi_m$  as linear combinations of $z_1\frac{\partial}{\partial z_1},\,\cdots,\,
z_n\frac{\partial}{\partial z_n}$.

The subset $V$ is the complement in $\Pn$ of a union $E$ of coordinate subspaces. It follows 
that it contains a torus $\T\,:=\,(\C^*)^{n-1}$ and is preserved under the natural action of $\T$ 
onto $\Pn$. The quotient of $\T$ by the $\C^m$-action is a complex abelian Lie group, say $G$, 
and $X$ is a smooth equivariant compactification of $G$.

The following alternative description of $G$ can be found in \cite[p.27]{meerssemanthesis} (and which itself is a straightforward generalization of Theorem 1 in \cite{meersseman2000}).

\begin{proposition}
	\label{Glattice}
	Assume that 
	\begin{equation}
	\label{firstmcondition}
	\text{\rm rank}_{\mathbb C}
	\begin{pmatrix}
	\Lambda_1 &\hdots &\Lambda_{m+1}\cr
	1 &\hdots &1
	\end{pmatrix}
	=m+1.
	\end{equation}
	Then $G_\Lambda$ is isomorphic to the quotient of $\mathbb C^{n-m-1}$ by the $\mathbb Z^{n-1}$ abelian subgroup generated by
	$(Id, B_\Lambda A_\Lambda^{-1})$ where
	\begin{equation*}
	A_\Lambda=^t\kern-4pt(\Lambda_2-\Lambda_1,\hdots,\Lambda_{m+1}-\Lambda_1)
	\end{equation*}
	and
	\begin{equation*}
	B_\Lambda= ^t\kern-4pt(\Lambda_{m+2}-\Lambda_1,\hdots,\Lambda_{n-1}-\Lambda_1).
	\end{equation*}
\end{proposition}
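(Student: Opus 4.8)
The plan is to pass to the universal cover of the open torus $\T$ and reduce the assertion to a linear-algebra computation governed entirely by the hypothesis \eqref{firstmcondition}. First I would introduce logarithmic coordinates: choosing the affine chart $\{z_1\neq 0\}$ identifies $\T\,=\,(\C^*)^{n-1}\,\subset\,\Pn$ with $\C^{n-1}/\Z^{n-1}$ via $[z]\,\mapsto\,\left(\frac{1}{2\pi i}\log(z_j/z_1)\right)_{j=2}^n$. Under this identification, formula \eqref{completeaction} shows that $T\in\C^m$ acts on $\C^{n-1}$ by the translation $u\,\mapsto\, u+L(T)$, where $L\colon\C^m\to\C^{n-1}$ is the linear map whose $j$-th component is $\frac{1}{2\pi i}\langle\Lambda_j-\Lambda_1,T\rangle$. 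Since the $\C^m$-action is free and proper, $L$ is injective and $V_\Lambda\,:=\,L(\C^m)$ is an $m$-dimensional complex subspace meeting the lattice $\Z^{n-1}$ only at $0$; consequently $G_\Lambda\,=\,\T/\C^m$ is the iterated quotient $(\C^{n-1}/V_\Lambda)/\overline{\Z^{n-1}}$, where $\overline{\Z^{n-1}}$ denotes the image of the lattice.

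Next I would exploit the rank hypothesis. Splitting the index set $\{2,\dots,n\}$ into the block $\{2,\dots,m+1\}$ and its complement, the map $L$ decomposes into the two row-blocks $A_\Lambda$ and $B_\Lambda$ (up to the harmless scalar $\frac{1}{2\pi i}$, which cancels below). Condition \eqref{firstmcondition} is precisely the statement that $\Lambda_2-\Lambda_1,\dots,\Lambda_{m+1}-\Lambda_1$ form a basis of $\C^m$, i.e. that $A_\Lambda$ is invertible: subtracting the first column from the others in the matrix of \eqref{firstmcondition} and expanding along the bottom row reduces its determinant to $\det A_\Lambda$. Hence $V_\Lambda$ is the graph $\{(A_\Lambda T,\,B_\Lambda T):T\in\C^m\}\,=\,\{(s,\,B_\Lambda A_\Lambda^{-1}s):s\in\C^m\}$ over the first block of coordinates.

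I would then realize the quotient explicitly. The projection $\pi\colon\C^{n-1}\to\C^{n-m-1}$ onto the complementary block, corrected by $(s,r)\mapsto r-B_\Lambda A_\Lambda^{-1}s$, has kernel exactly $V_\Lambda$ and therefore descends to an isomorphism $\C^{n-1}/V_\Lambda\,\xrightarrow{\sim}\,\C^{n-m-1}$. Evaluating $\pi$ on the standard basis of $\Z^{n-1}$ finishes the computation: the basis vectors indexed by the complementary block map to the standard basis of $\C^{n-m-1}$, producing the identity block, while those indexed by $\{2,\dots,m+1\}$ map to the columns of $B_\Lambda A_\Lambda^{-1}$ (up to sign, which can be absorbed in the choice of generators). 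Thus $\overline{\Z^{n-1}}$ is the subgroup generated by the columns of $(Id,\,B_\Lambda A_\Lambda^{-1})$, and $G_\Lambda\,\cong\,\C^{n-m-1}/\overline{\Z^{n-1}}$ is exactly the asserted quotient.

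The one step requiring genuine care, as opposed to bookkeeping, is the justification that the simultaneous quotient of $\C^{n-1}$ by $V_\Lambda+\Z^{n-1}$ may be computed as the \emph{iterated} quotient, with an image lattice that is discrete and free of rank $n-1$. This is where freeness and properness of the $\C^m$-action are essential: they guarantee $V_\Lambda\cap\Z^{n-1}=\{0\}$, so that $\pi$ restricts to an injection on the lattice and $\overline{\Z^{n-1}}\,\cong\,\Z^{n-1}$. Without this, the image need not be discrete and the quotient would fail to be the stated abelian complex Lie group.
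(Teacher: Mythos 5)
The paper does not actually prove Proposition \ref{Glattice}: it imports it from \cite[p.27]{meerssemanthesis} as a straightforward generalization of Theorem 1 of \cite{meersseman2000}, so there is no in-text argument to compare against. Your proof is correct and is essentially the standard argument from that source: pass to logarithmic coordinates on $\T\cong\C^{n-1}/\Z^{n-1}$, observe that condition \eqref{firstmcondition} is equivalent (by column reduction and expansion along the bottom row) to invertibility of $A_\Lambda$, write $V_\Lambda$ as a graph over the first coordinate block, and push the lattice forward under the induced isomorphism $\C^{n-1}/V_\Lambda\cong\C^{n-m-1}$. Your closing remark correctly isolates where freeness and properness of the $\C^m$-action enter (injectivity and discreteness of the image of $\Z^{n-1}$), though strictly speaking $V_\Lambda\cap\Z^{n-1}=\{0\}$ only gives injectivity; discreteness of the image is what properness supplies. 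One bookkeeping point worth noting: as printed, $B_\Lambda$ runs only up to $\Lambda_{n-1}-\Lambda_1$ and so has $n-m-2$ rows, which is inconsistent with $(Id,\,B_\Lambda A_\Lambda^{-1})$ being an $(n-m-1)\times(n-1)$ matrix of generators; your computation implicitly uses the full complementary block up to $\Lambda_n-\Lambda_1$, which is the correct reading of the statement.
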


\begin{remark}
	It is easy to prove that 
	\begin{equation*}
	\text{\rm rank}_{\mathbb C}
	\begin{pmatrix}
	\Lambda_1 &\hdots &\Lambda_n\cr
	1 &\hdots &1
	\end{pmatrix}
	=m+1.
	\end{equation*}
	(cf. {\rm \cite[Lemma 1.1]{meersseman2004holomorphic}} in the LVM case). Hence, up to a permutation, condition \eqref{firstmcondition} is always fulfilled.
\end{remark}
	
Let $\G$ be the Lie algebra of $G$. Given $a\in\G$, let 
$$
v(x)\,:=\,{\dfrac{d}{dt}}\left(x\cdot \exp(ta)\right)\vert_{t=0}\, ,\ \ x\,\in\, X
$$
be the corresponding fundamental vector field. 

\begin{definition}\label{defNVP}
We say that $X$ has the non-zero vanishing property if we may find a basis 
$(a_1,\hdots,a_{n-m-1})$ of $\G$ such that the corresponding fundamental vector fields 
$(v_1,\hdots,v_{n-m-1})$ all have a non-empty vanishing locus. \end{definition}

Let $k$ denote the number of coordinate hyperplanes in $E$. We have 

\begin{theorem}\label{thLVMB} A LVMB manifold $X$ has the non-zero vanishing property if and 
only if the inequality $k\,\leq\, m+1$ is fulfilled.
\end{theorem}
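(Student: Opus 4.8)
The plan is to convert the non-zero vanishing property into a single linear-algebra statement about the vectors $\Lambda_i$, and to recognise $k\le m+1$ as its content. First I would lift the fundamental vector fields to $\Pn$. The Lie algebra $\G$ of $G$ is identified with $\C^n/W$, where $W\subseteq\C^n$ is the $(m+1)$-dimensional subspace spanned by $(1,\ldots,1)$ together with the $m$ rows of the matrix $(\Lambda_1,\ldots,\Lambda_n)$; an element $a\in\G$ represented by $c\in\C^n$ corresponds to the field $\sum_i c_i z_i\partial_{z_i}$. Writing $S:=\{i : z_i\neq0\}$ for the support of $[z]\in V$ and $\tilde\Lambda_i:=(\Lambda_i,1)\in\C^{m+1}$, the induced field on $X$ vanishes at the image of $[z]$ exactly when $\sum_i c_i z_i\partial_{z_i}$ is tangent at $[z]$ to the $\C^m$-orbit and the Euler direction, i.e. when
$$ c_i=\langle\Lambda_i,t\rangle+s \quad (i\in S)\ \text{ for some } (t,s)\in\C^{m+1}, $$
which is to say $\pi_S(c)\in\pi_S(W)$, with $\pi_S\colon\C^n\to\C^S$ the coordinate restriction. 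On the open orbit $S=\{1,\ldots,n\}$ this forces $c\in W$, so every nonzero fundamental field is zero-free on $G$ and all its zeros sit on the boundary.

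Next I would run over the admissible supports. As $V$ is the complement of a union of coordinate subspaces, the family $\mathcal S$ of supports occurring in $V$ is stable under enlargement, and $K:=\bigcap_{S\in\mathcal S}S$ is exactly the set of indices $i$ for which $\{z_i=0\}\subseteq E$; thus $|K|=k$. The fields having a nonempty zero locus are precisely those whose class lies in $\bigcup_{S\in\mathcal S}L_S$ with $L_S:=(W+\C^{S^c})/W$. Since $S^c\subseteq K^c$, this union is contained in the subspace $(W+\C^{K^c})/W$, and it already spans it, because the boundary divisors $S=\{1,\ldots,n\}\setminus\{j\}$ ($j\in K^c$) contribute the lines $\C\,\overline{e_j}$ whose span is $(\C^{K^c}+W)/W$. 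A basis of $\G$ made of such fields therefore exists if and only if $\bigcup_S L_S$ spans $\G$, i.e. if and only if $W+\C^{K^c}=\C^n$, i.e. $\pi_K(W)=\C^K$. As $\pi_K(W)$ is the row span of the $(m+1)\times k$ matrix whose columns are the $\tilde\Lambda_i$ $(i\in K)$, this means that $\{\tilde\Lambda_i\}_{i\in K}$ are $\C$-linearly independent, equivalently that $\{\Lambda_i\}_{i\in K}$ are $\C$-affinely independent. The forward implication is now immediate: if the property holds then $k=\dim\pi_K(W)\le\dim W=m+1$.

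The hard part is the converse: that $k\le m+1$ forces $\{\tilde\Lambda_i\}_{i\in K}$ to be independent, i.e. that an indispensable family of size at most $m+1$ cannot be $\C$-affinely degenerate. Here I would invoke the admissibility (Siegel/imbalance) hypothesis, under which $0\in\operatorname{conv}\{\Lambda_j\}$ and $K$ is exactly the set of \emph{indispensable} indices, those $i$ with $0\notin\operatorname{conv}\{\Lambda_j:j\neq i\}$. If $\{\Lambda_i\}_{i\in K}$ were $\C$-affinely dependent with $k\le m+1$, they would lie in a proper \emph{complex} affine subspace of $\C^m$, hence in a real-affine subspace of real codimension at least two; I would then feed the real separating functionals witnessing indispensability, together with the Siegel relation $0=\sum_j\beta_j\Lambda_j$ ($\beta_j>0$), through this subspace to exhibit a convex combination of $\{\Lambda_j:j\neq i_0\}$ equal to $0$ for some $i_0\in K$, contradicting its indispensability. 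I expect this to be the main obstacle: the delicate point is that it is the \emph{complex}-linear degeneracy, not mere real-affine degeneracy, that has to be ruled out, so the real convex geometry must be combined with the complex structure — and it is precisely the bound $k\le m+1$ that guarantees the proper complex-affine subspace which makes the argument possible.
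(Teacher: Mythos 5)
Your reduction of the non-zero vanishing property to a linear-algebra statement is correct and is in substance the same computation the paper performs less formally: the fundamental vector fields are the images of $z_1\frac{\partial}{\partial z_1},\hdots,z_n\frac{\partial}{\partial z_n}$ in $\G\,=\,\C^n/W$, the classes with non-empty zero locus span exactly $(W+\C^{K^c})/W$, and the forward implication (the property forces $k\,\leq\,m+1$) drops out of a dimension count in both treatments. The problem is the converse. You have correctly isolated the statement that carries all the content --- namely that $k\,\leq\,m+1$ forces the vectors $\tilde\Lambda_i\,=\,(\Lambda_i,1)$, $i\,\in\,K$, to be $\C$-linearly independent, equivalently $\pi_K(W)\,=\,\C^K$ --- but you do not prove it: the last paragraph is a declaration of intent (``I would then feed the real separating functionals\dots I expect this to be the main obstacle''), not an argument. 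As it stands the proof is incomplete precisely at the step you yourself flag as the hard one.

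Moreover, the strategy you sketch is unlikely to close the gap on its own. The Siegel relation, weak hyperbolicity and the indispensability of $K$ are all conditions on the \emph{real} convex position of the $\Lambda_i$; via Carath\'eodory they do yield, rather easily, that $\{\Lambda_i\}_{i\in K}$ is real-affinely independent (every minimal convex representation of $0$ is supported on $2m+1$ real-affinely independent points containing $K$). But these hypotheses are invariant under real-linear changes of coordinates of $\C^m\,\cong\,\R^{2m}$, whereas complex-affine degeneracy is not, so no amount of separating-hyperplane manipulation alone can upgrade real-affine to complex-affine independence; some genuinely complex-analytic input from the admissibility data must enter. This is exactly where the paper goes a different way: it does not argue by convexity at all, but uses the $\C^m$-action together with the projective rescaling to normalize the $k$ coordinates indexed by $K$ to $1$ --- i.e.\ it invokes the surjectivity of $(T,s)\,\longmapsto\,\bigl(\langle\Lambda_i,T\rangle+s\bigr)_{i\in K}$, the same rank statement as in Proposition \ref{Glattice} and condition \eqref{firstmcondition}, as part of the structure theory of LVMB manifolds. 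To complete your proof you must either supply a convexity-plus-complex-structure argument that actually rules out the degenerate configuration, or, as the paper does, import the relevant rank/normalization property from the LVMB admissibility conditions; at present neither is done.
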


\begin{proof}
Observe that the natural map $V\to X$ induces a Lie group morphism $\T\to G$. Hence, using homogeneous coordinates $[z_1;\hdots,z_n]$ in $\mathbb P^{n-1}$, the image 
of the vector fields $z_1\frac{\partial}{\partial z_1},\,\cdots,\,
z_n\frac{\partial}{\partial z_n}$ form a generating family of fundamental vector fields of $X$.
	
Observe also that such a vector field $z_i\frac{\partial}{\partial z_i}$ vanishes
on some point of $X$ if and only if (the projectivization of) $\{z_i\,=\,0\}$ is not included in $E$.

Assume firstly that $k\,\leq\, m+1$. For simplicity, assume that the coordinate hyperplanes 
belonging to $E$ are $\{z_{n-k-1}\,=\,0\},\,\cdots ,\,\{z_n\,=\,0\}$. Observe that the $\mathbb C^m$-action \eqref{completeaction} preserves the zero and the non-zero coordinates. Given $[z]\,\in\, V$,
since $k\,\leq\, m+1$, 
we may thus find some $T\,\in\,\C^m$ such that the image $w$ of $z$ under the action of $T$ has 
the form
$$
[w_1,\,\hdots, \,w_{n-k-2},\,1,\,\hdots,\,1]\, .
$$
Such a $T$ is not unique but unique up to some additive subgroup $H$ of $\C^m$ of dimension $m-k-1$,
cf. the statement of Proposition \ref{Glattice} whose proof follows the same line of arguments. In other
words, setting 
$$
V_0\,:=\,\{[w_1,\,\hdots,\, w_{n-k-2},\,1,\hdots,\,1]\,\in \, V\}
$$
then $X$ is isomorphic to $V_0$ quotiented by the action of $H$. As a consequence, the
smaller family $z_1\frac{\partial}{\partial z_1},\, \cdots,\,
z_{n-k-2}\frac{\partial}{\partial z_{n-k-2}}$ still descends as a generating
family of fundamental vector fields of $X$. Hence we may find $n-m-1$ vector fields
in this family which form a basis of fundamental vector fields. Now they all vanish on $X$
by the above observation.
	
To prove the converse, assume that $k\,>\,m+1$. Then in the generating family of
fundamental vector fields, we have strictly less than $n-m-1$ vector fields with non-empty
vanishing locus. Moreover, a linear combination of vector fields of the family, say
$$
\sum_{i=1}^n \alpha_iz_i\frac{\partial}{\partial z_i}
$$
has vanishing locus
$$
V\cap{\mathbb P} (\{z_i\,=\,0\,\mid\, \alpha_i\not \,=\,0\})
$$
Hence to have non-empty vanishing locus, a fundamental vector field must be a linear
combination of the $n-k$ vector fields with non-empty vanishing locus of the generating
family. Since $n-k\,<\,n-m-1$, we cannot find a basis of such vector fields. 
\end{proof}
	
\begin{remark}\label{rkaffine}
The case $k\,=\,m+1$ in Theorem \ref{thLVMB} is of special interest, because the corresponding LVMB 
manifolds admit an affine atlas, see Lemma 12.1 of \cite{BosioActa} for a proof in the LVM case, 
which trivially generalizes to LVMB manifolds.  Examples of LVMB fulfilling $k=m+1$ include diagonal Hopf manifolds, but also manifolds diffeomorphic to the product of $(\mathbb S^1)^m$ with connected sums of sphere products and even manifolds with arbitrary torsion in the sense of 
Theorem 14.1 and Corollary 14.3 of \cite{BosioActa}. 
\end{remark}

\section{Stability and holomorphic connections on LVMB manifolds}

As in Theorem \ref{thLVMB}, let $X$ be a LVMB manifold, of complex dimension $\nu\,:=\, n-m-1$,
such that $k\,\leq\, m+1$. A Gauduchon metric on $X$ is a Hermitian structure $h$ such that
the corresponding $(1,\, 1)$-form $\omega_h$ satisfies the equation $\partial\overline{\partial}
\omega^{\nu-1}_h\,=\, 0$; Gauduchon metrics exist \cite{Ga}. Fix a Gauduchon form
$\omega_h$ on $X$. The degree of a torsionfree coherent analytic sheaf $F$ on $X$ is defined to be
$$
\text{degree}(F)\,:=\,
\frac{\sqrt{-1}}{2\pi}\int_X K(\det F)\wedge \omega_h^{\nu-1}\, \in\, {\mathbb R}\, ,
$$
where $\det F$ is the determinant line
bundle for $F$ \cite[Ch.~V, \S~6]{Ko} (or Definition 1.34 in \cite{Br}) and $K$ is the
curvature for a hermitian connection on
$\det F$ compatible with the Dolbeault operator
$\overline \partial_{\det F}$. This degree is independent of the Hermitian
metric on $\det F$, because any two such
curvature forms differ by a $\partial \overline\partial$-exact $2$--form on $X$:
$$
\int_X (\partial \overline\partial u)\wedge \omega_h^{\nu-1}\,=\,
-\int_X u \wedge\partial \overline\partial \omega_h^{\nu-1}\,=\, 0\, .
$$

A torsionfree coherent analytic sheaf $F$ on $X$ is called \textit{semistable} if
$$
\frac{\text{degree}(V)}{\text{rank}(V)}\, \leq\, \frac{\text{degree}(F)}{\text{rank}(F)}
$$
for all nonzero coherent analytic subsheaf $V\, \subset\, F$ (see \cite[p.~44, Definition 1.4.3]{LT}, \cite[Ch.~V, \S~7]{Ko}).

Let
\begin{equation}\label{et1}
T_0 \,\subsetneq\, T_1\, \subsetneq\, \cdots \, \subsetneq\, T_{\ell} \,
\subsetneq\, T_{\ell+1}\,=\, TX
\end{equation}
be the Harder--Narasimhan filtration of the holomorphic tangent bundle $TX$. We note that
$TX$ is semistable if and only if $\ell\,=\, 0$.

\begin{lemma}\label{lem1}
The subsheaf $T_\ell$ in \eqref{et1} satisfies the following condition:
$$
{\rm degree}(TX/T_\ell)\, >\, 0\, .
$$
\end{lemma}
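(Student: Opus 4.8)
The plan is to realize the minimal-slope quotient $Q\,:=\,TX/T_\ell$, which is the last graded piece of the Harder–Narasimhan filtration \eqref{et1}, as a generically globally generated torsionfree sheaf whose determinant line bundle carries a nonzero holomorphic section vanishing along a non-empty divisor. Strict positivity of the degree then follows from the Poincar\'e–Lelong formula together with the Gauduchon condition, exactly as in the computation showing that the degree is well defined.

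First I would record that $T_\ell$ is saturated in $TX$, so that $Q$ is torsionfree, of some rank $r\,\geq\,1$ (here $r\geq 1$ because $T_\ell\,\subsetneq\,T_{\ell+1}\,=\,TX$), and that ${\rm degree}(Q)\,=\,{\rm degree}(\det Q)$, where $\det Q\,=\,(\bigwedge^r Q)^{**}$ is a genuine line bundle on all of $X$. Since $k\,\leq\, m+1$, Theorem \ref{thLVMB} provides a basis $(a_1,\dots,a_{n-m-1})$ of $\G$ whose fundamental vector fields $(v_1,\dots,v_{n-m-1})$ span $TX$ at the generic point, each having non-empty vanishing locus. Moreover, by the explicit description in the proof of Theorem \ref{thLVMB}, one may take the $v_i$ to be the images of the coordinate fields $z_i\frac{\partial}{\partial z_i}$, so that the vanishing locus of each is a non-empty coordinate hyperplane section, that is, a divisor in $X$.

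Next I would push these fields into $Q$. Let $\overline{v}_i$ denote the image of $v_i$ under the projection $TX\to Q$. Since the $v_i$ span $TX$ at a generic point and $TX\to Q$ is surjective there, the $\overline{v}_i$ span $Q$ at a generic point; hence one can select indices $i_1,\dots,i_r$ so that $\overline{v}_{i_1},\dots,\overline{v}_{i_r}$ are linearly independent at a generic point. The wedge $s\,:=\,\overline{v}_{i_1}\wedge\cdots\wedge\overline{v}_{i_r}$ is then a holomorphic section of $\det Q$ that is not identically zero. Crucially, $s$ vanishes wherever any one of its factors does, so its zero divisor contains the vanishing locus $D$ of $v_{i_1}$, a non-empty divisor; thus $s$ is a nonzero section of $\det Q$ with non-empty effective zero divisor $D_s\,\supseteq\, D$.

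Finally I would compute the degree. Fixing a Hermitian metric $h$ on $\det Q$ with Chern curvature $K(\det Q)$ and writing $|s|_h$ for the pointwise norm of $s$, the Poincar\'e–Lelong formula gives $\frac{\sqrt{-1}}{2\pi}K(\det Q)\,=\,[D_s]-\frac{\sqrt{-1}}{2\pi}\partial\overline{\partial}\log|s|_h^2$. Wedging with $\omega_h^{\nu-1}$ and integrating over $X$, the $\partial\overline{\partial}$-term vanishes by the same integration by parts used in the text to show that the degree is well defined (namely $\partial\overline{\partial}\omega_h^{\nu-1}\,=\,0$, while $\log|s|_h^2$ is locally integrable). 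Hence ${\rm degree}(TX/T_\ell)\,=\,\int_{D_s}\omega_h^{\nu-1}$, which is strictly positive because $\omega_h^{\nu-1}$ restricts to a positive volume form on the $(\nu-1)$-dimensional analytic set $D_s$. The main obstacle is precisely this last strict-positivity step: it requires knowing that $D_s$ is genuinely a divisor rather than a higher-codimension set, which is guaranteed by the coordinate description of the $v_i$ inherited from the proof of Theorem \ref{thLVMB}, together with the careful (but routine) justification of the integration by parts for the singular potential $\log|s|_h^2$.
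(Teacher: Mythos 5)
Your proposal is correct and follows essentially the same route as the paper: choose $r$ fundamental vector fields with non-empty zero loci whose images generate $TX/T_\ell$ generically, wedge them to obtain a nonzero section $\sigma$ of $\det(TX/T_\ell)$ vanishing on a non-empty set, and conclude ${\rm degree}(TX/T_\ell)={\rm Volume}_h({\rm div}(\sigma))>0$ (the paper cites \cite[Proposition 5.23]{Br} where you spell out Poincar\'e--Lelong). Your worry that ${\rm div}(\sigma)$ could be supported in higher codimension is moot: since $\sigma$ is a not-identically-zero section of a \emph{line} bundle on a connected manifold, its zero set is automatically either empty or of pure codimension one, so non-emptiness alone suffices.
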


\begin{proof}
Fix a basis $(a_1,\,\hdots,\, a_{\nu})$ of $\G$ such that the corresponding fundamental vector fields
$(v_1,\,\hdots,\, v_{\nu})$ have the property that each of them has a non-empty vanishing locus.
Theorem \ref{thLVMB} ensures that such a basis exists. Let $r$ be the rank of
$TX/T_\ell$. Fix $r$ elements from $(v_1,\,\hdots,\, v_{\nu})$ satisfying the following condition:
their projections to $TX/T_\ell$ together generate $TX/T_\ell$ over a Zariski open subset. Let
$(w_1,\,\hdots,\, w_r)$ denote this subset of $(v_1,\,\hdots,\, v_{\nu})$. Therefore,
$w_1\wedge \cdots\wedge w_r$ is a holomorphic section of $\bigwedge^r TX$ which is not identically
zero.

Consider the holomorphic line bundle $\det (TX/T_\ell)$ over $X$. The quotient map
$TX\, \longrightarrow\, TX/T_\ell$ produces a natural projection
$q\, :\, \bigwedge^r TX\, \longrightarrow\, \det (TX/T_\ell)$. Let
$$
\sigma\,:=\, q(w_1\wedge \cdots\wedge w_r) \, \in\, H^0(X,\, \det (TX/T_\ell))
$$
be the nonzero holomorphic section given by $w_1\wedge \cdots\wedge w_r$. If any
of $(w_1,\,\hdots,\, w_r)$ vanishes at a point $x\, \in\, X$, then clearly we have
$\sigma(x)\,=\, 0$. In particular, the divisor ${\rm div}(\sigma)$ is nonzero. Consequently,
we have (see \cite[Proposition 5.23]{Br})
$$
{\rm degree}(TX/T_\ell)\,=\, \text{Volume}_h({\rm div}(\sigma)) \,>\, 0\, .
$$
This completes the proof.
\end{proof}

Let $E$ be a holomorphic
vector bundle on $X$ equipped with a holomorphic connection $D$ (see \cite{At} for
holomorphic connections). Let
\begin{equation}\label{e21}
0\,=\, E_0\, \subset\, E_1 \,\subsetneq\, E_2\, \subsetneq\, \cdots \, \subsetneq\, E_b \,
\subsetneq\, E_{b+1}\,=\, E
\end{equation}
be the Harder--Narasimhan filtration of $E$, so $E$ is semistable if and only if $b\,=\, 0$.

\begin{proposition}\label{prop1}
For every $1\, \leq\, i\, \leq\, b-1$, the subsheaf $E_i\, \subset\, E$ in \eqref{e21}
is preserved by the holomorphic connection $D$.
\end{proposition}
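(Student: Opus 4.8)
The plan is to show that the obstruction to $D$-invariance of $E_i$ vanishes, via the second fundamental form of the connection combined with a slope comparison that feeds on Lemma \ref{lem1}. First I would introduce this second fundamental form: composing the inclusion $E_i\hookrightarrow E$ with $D\colon E\to E\otimes\Omega^1_X$ and the projection $E\otimes\Omega^1_X\to (E/E_i)\otimes\Omega^1_X$ yields a map
$$
\psi_i\colon E_i\longrightarrow (E/E_i)\otimes\Omega^1_X .
$$
The Leibniz rule shows $\psi_i$ is $\mathcal{O}_X$-linear: for a local function $f$ and local section $s$ of $E_i$, the term $s\otimes df$ in $D(fs)$ maps to zero in $(E/E_i)\otimes\Omega^1_X$ because $s\in E_i$. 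By construction $E_i$ is preserved by $D$ precisely when $\psi_i=0$, so it suffices to rule out $\psi_i\neq 0$, which I would do by contradiction.

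Next comes the slope estimate. Write $\mu_j:=\mu(E_j/E_{j-1})$, so the Harder--Narasimhan property of \eqref{e21} gives $\mu_1>\cdots>\mu_{b+1}$; moreover $E_i$ is saturated (hence $E/E_i$ is torsion-free), its smallest HN slope is $\mu_{\min}(E_i)=\mu_i$, and the largest HN slope of the quotient is $\mu_{\max}(E/E_i)=\mu_{i+1}$. If $\psi_i\neq 0$, its image $Q$ is a nonzero torsion-free quotient of $E_i$ and a subsheaf of the torsion-free sheaf $(E/E_i)\otimes\Omega^1_X$, so
$$
\mu_i=\mu_{\min}(E_i)\,\leq\,\mu(Q)\,\leq\,\mu_{\max}\bigl((E/E_i)\otimes\Omega^1_X\bigr).
$$
Using that tensor products of semistable sheaves over a Gauduchon manifold remain semistable, one gets $\mu_{\max}((E/E_i)\otimes\Omega^1_X)=\mu_{i+1}+\mu_{\max}(\Omega^1_X)$; and since $\Omega^1_X=(TX)^*$, dualizing the filtration \eqref{et1} gives $\mu_{\max}(\Omega^1_X)=-\mu_{\min}(TX)$. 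Combining these, $\mu_i-\mu_{i+1}\leq-\mu_{\min}(TX)$.

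Finally I would invoke Lemma \ref{lem1}: the bottom quotient $TX/T_\ell$ is exactly the minimal-slope piece of the HN filtration of $TX$, so $\mu_{\min}(TX)=\mathrm{degree}(TX/T_\ell)/\mathrm{rank}(TX/T_\ell)>0$, whence $\mu_{\max}(\Omega^1_X)<0$. But the strict decrease of HN slopes gives $\mu_i-\mu_{i+1}>0$, contradicting $\mu_i-\mu_{i+1}\leq-\mu_{\min}(TX)<0$. Therefore $\psi_i=0$, i.e. $E_i$ is $D$-invariant; being a subsheaf preserved by a holomorphic connection, it is then automatically locally free. The same computation in fact also settles $i=b$. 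The step I expect to be the main obstacle is the identity $\mu_{\max}((E/E_i)\otimes\Omega^1_X)=\mu_{i+1}+\mu_{\max}(\Omega^1_X)$: it rests on the semistability of tensor products of semistable sheaves, which in the non-K\"ahler Gauduchon setting is not elementary and must be drawn from the Kobayashi--Hitchin theory over Gauduchon manifolds rather than established by a direct argument.
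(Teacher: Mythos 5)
Your argument is correct in outline, but it takes a genuinely different route from the paper's, and a heavier one. The paper never estimates $\mu_{\max}((E/E_i)\otimes\Omega^1_X)$: it contracts the second fundamental form with a global holomorphic vector field $v$, obtaining for each $v$ an ${\mathcal O}_X$-linear map $E_i\to E/E_i$, which vanishes because $\mu_{\min}(E_i)=\mu_i>\mu_{i+1}=\mu_{\max}(E/E_i)$ --- the elementary Harder--Narasimhan fact that there is no nonzero homomorphism $E_i\to E/E_i$. Since global holomorphic vector fields generate $TX$ over a Zariski-open subset (true for any LVMB manifold, being an equivariant compactification of $G$), the second fundamental form itself vanishes. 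That argument uses neither Lemma \ref{lem1} nor any positivity of $TX$, and in particular no tensor-product semistability. Your route replaces the generic global generation of $TX$ by the inequality $\mu_{\max}(\Omega^1_X)<0$, which you correctly extract from Lemma \ref{lem1} via dualizing \eqref{et1}; the price is exactly the step you flag, namely $\mu_{\max}((E/E_i)\otimes\Omega^1_X)=\mu_{i+1}+\mu_{\max}(\Omega^1_X)$. Over a Gauduchon manifold this rests on the approximate Hermitian--Einstein theorem of Nie--Zhang, which is stated for holomorphic vector bundles, whereas $E/E_i$ is a priori only a torsion-free sheaf; you would need an additional reduction (passing to reflexive hulls or to the locally free locus, with degrees unaffected in codimension two) before the tensor semistability can be invoked. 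So your proof can be completed, but the paper's contraction trick is cheaper and more robust: it works on any compact complex manifold whose tangent bundle is generically generated by global holomorphic vector fields, with no condition on $\mu_{\max}(\Omega^1_X)$, and it reserves the approximate Hermitian--Einstein machinery for Proposition \ref{prop2}, where it is applied only to genuine bundles.
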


\begin{proof}
Take any $1\, \leq\, i\, \leq\, b-1$. Let 
$$
S_i\, :\, TX\otimes E_i\, \longrightarrow\, E/E_i
$$
be the homomorphism given by the second fundamental form of $E_i$ for the connection $D$. We recall
that this second fundamental form is given by the composition
$$
E_i\, \stackrel{D}{\longrightarrow}\, E\otimes \Omega^1_X \,
\stackrel{q_i\otimes{\rm Id}}{\longrightarrow}\, (E/E_i)\otimes \Omega^1_X\, ,
$$
where $q_i\, :\, E\, \longrightarrow\, E/E_i$ is the quotient map. Tensoring both sides of
it with $TX$ and taking trace, the above homomorphism $S_i$ is obtained from the
second fundamental form.

For any holomorphic vector field $v$ on $X$, consider the composition
\begin{equation}\label{e3}
E_i\, \stackrel{v\otimes -}{\longrightarrow}\, TX\otimes E_i \, \stackrel{S_i}{\longrightarrow}\,
E/E_i\, .
\end{equation}
From the properties of the Harder--Narasimhan filtration we know that there is no nonzero
homomorphism from $E_i$ to $E/E_i$. Hence the composition homomorphism in \eqref{e3}
vanishes identically. Since global holomorphic vector fields generate $TX$ over a Zariski open
subset, the proposition follows.
\end{proof}

\begin{corollary}\label{cor1}
Each subsheaf $E_i$ in \eqref{e21} is a holomorphic subbundle of $E$.
\end{corollary}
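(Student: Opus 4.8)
The plan is to deduce subbundle-ness from the connection-invariance supplied by Proposition \ref{prop1}, via the classical principle that a coherent analytic sheaf carrying a holomorphic connection is automatically locally free. The two extreme terms $E_0\,=\,0$ and $E_{b+1}\,=\,E$ in \eqref{e21} are trivially subbundles, so I would concentrate on the range $1\,\leq\, i\,\leq\, b$. The first step is to record that \emph{every} such $E_i$ is preserved by $D$. For $1\,\leq\, i\,\leq\, b-1$ this is precisely Proposition \ref{prop1}. The remaining case $i\,=\,b$ follows by the identical argument: since the slopes of the successive quotients of the Harder--Narasimhan filtration \eqref{e21} are strictly decreasing, the minimal slope of $E_b$ strictly exceeds the slope of $E/E_b\,=\,E_{b+1}/E_b$, so there is no nonzero homomorphism $E_b\,\to\, E/E_b$; hence the composition appearing in the analogue of \eqref{e3} vanishes for all global vector fields $v$, and the invariance $D(E_b)\,\subset\, E_b\otimes\Omega^1_X$ follows as before.

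Fixing such an $i$, the invariance $D(E_i)\,\subset\, E_i\otimes\Omega^1_X$ lets $D$ descend to the quotient: for a local section $\overline s$ of $E/E_i$ one sets $\overline D\,\overline s\,:=\,q_i(Ds)$ for any local lift $s$ of $\overline s$ to $E$, where $q_i\,:\,E\,\to\, E/E_i$ is the quotient map. This is well defined precisely because $D$ preserves $E_i$ (a different lift changes $s$ by a section of $E_i$, whose $D$-image lies in $E_i\otimes\Omega^1_X$ and is killed by $q_i$), and the Leibniz rule for $\overline D$ is inherited from that of $D$. Thus $E/E_i$ is a coherent analytic sheaf equipped with a holomorphic connection. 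Invoking the standard fact that a coherent analytic sheaf admitting a holomorphic connection is locally free (see \cite{At}), we conclude that $E/E_i$ is a holomorphic vector bundle. Consequently $q_i\,:\,E\,\to\, E/E_i$ is a surjective morphism of holomorphic vector bundles, hence of locally constant rank, so its kernel $E_i$ is a holomorphic subbundle of $E$, which is the assertion of the corollary.

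I expect the genuine content to lie in the local-freeness principle rather than in its deployment: once the induced connection $\overline D$ on $E/E_i$ is in hand, the reduction to a surjection of vector bundles is formal. The two points I would check with care are the well-definedness and $\mathcal{O}_X$-additivity of $\overline D$ (which rest entirely on the $D$-invariance from Proposition \ref{prop1}), and the verification that the case $i\,=\,b$ is genuinely covered by the slope inequality above, so that the statement holds for every member of the filtration \eqref{e21} and not merely for the terms addressed directly by Proposition \ref{prop1}.
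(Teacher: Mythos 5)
Your argument is correct, and it reaches the conclusion by a route that differs from the paper's in a useful way. The paper's proof is a one-line appeal to Lemma 4.5 of \cite{BD}: a coherent analytic subsheaf invariant under the holomorphic connection $D$ is locally free. You instead push the connection to the quotient: $D$-invariance of $E_i$ gives a well-defined induced holomorphic connection $\overline D$ on $E/E_i$, the principle ``coherent sheaf with a holomorphic connection is locally free'' then applies to $E/E_i$, and local freeness of the quotient is exactly what makes $E_i$ a subbundle. Both proofs rest on the same underlying local-freeness principle, but your version buys two things. First, it delivers the precise statement of the corollary more directly: local freeness of $E_i$ alone does not imply that $E_i$ is a \emph{subbundle} (a saturated locally free subsheaf of a bundle can still fail to be a subbundle in dimension $\geq 2$, e.g.\ the image of $\mathcal{O}\to\mathcal{O}^{2}$, $f\mapsto(xf,yf)$, near the origin of $\mathbb{C}^{2}$), whereas local freeness of $E/E_i$ does. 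Second, you correctly notice that Proposition \ref{prop1} as stated only covers $1\leq i\leq b-1$, while the corollary needs $D$-invariance of $E_b$ as well; your extension via the slope inequality $\mu_{\min}(E_b)>\mu_{\max}(E/E_b)$, killing the composition in the analogue of \eqref{e3}, is exactly the missing case and is the same argument the paper uses for the other indices. The only cosmetic quibble is the attribution of the local-freeness principle to \cite{At}, which treats connections on bundles rather than on coherent sheaves; the fact itself is standard and is what \cite{BD} supplies.
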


\begin{proof}
By Lemma 4.5 in \cite{BD}, any coherent analytic subsheaf $E_i$ invariant by
the holomorphic connection $D$ is locally free.
\end{proof}

For each $1\, \leq\, i\, \leq\, b$, let $D_i$ be the holomorphic connection, induced by
$D$, on the quotient bundle $E_i/E_{i-1}$ in \eqref{e21}.

\begin{proposition}\label{prop2}
The holomorphic connection $D_i$ on $E_i/E_{i-1}$ is flat for every $i$.
\end{proposition}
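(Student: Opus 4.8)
The plan is to prove that the curvature of $D_i$ vanishes by pitting the positivity of $TX$ recorded in Lemma \ref{lem1} against the semistability of the graded piece. Set $W\,:=\,E_i/E_{i-1}$; by construction it is a semistable vector bundle (a graded piece of the Harder--Narasimhan filtration \eqref{e21}), and by Proposition \ref{prop1} together with Corollary \ref{cor1} the connection $D$ descends to the holomorphic connection $D_i$ on $W$. Since the curvature of a holomorphic connection is $\mathcal O_X$-linear, it defines a holomorphic section
$$
\mathcal F(D_i)\,\in\, H^0\big(X,\, \Omega^2_X\otimes \text{End}(W)\big)\,=\,\text{Hom}\Big(\textstyle\bigwedge^2 TX,\, \text{End}(W)\Big)\, ,
$$
and $D_i$ is flat if and only if this section vanishes. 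It therefore suffices to show that there is no nonzero sheaf homomorphism $\psi\,:\,\bigwedge^2 TX\,\longrightarrow\, \text{End}(W)$.

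First I would record the two relevant slope facts. Since $W$ is semistable, the endomorphism bundle $\text{End}(W)\,=\,W\otimes W^*$ is semistable of slope $0$ (its degree is $0$ regardless of $\deg W$); this uses that a tensor product of semistable sheaves is again semistable for the Gauduchon degree. On the other side, Lemma \ref{lem1} asserts that $\deg(TX/T_\ell)\,>\,0$, and since $TX/T_\ell$ is the last Harder--Narasimhan quotient this means precisely that the minimal slope $\mu_{\min}(TX)\,=\,\mu(TX/T_\ell)$ is strictly positive. Passing to the second exterior power, the minimal slope of $\bigwedge^2 TX$ is bounded below by $2\,\mu_{\min}(TX)$, hence is again strictly positive.

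The contradiction then comes from a standard slope comparison. Assume $\mathcal F(D_i)\neq 0$ and let $\psi$ be the associated nonzero homomorphism; its image $\text{Im}(\psi)$ is a nonzero torsionfree sheaf. Viewed as a quotient of $\bigwedge^2 TX$ it satisfies $\mu(\text{Im}(\psi))\,\geq\,\mu_{\min}(\bigwedge^2 TX)\,>\,0$, whereas viewed as a subsheaf of the semistable slope-$0$ sheaf $\text{End}(W)$ it satisfies $\mu(\text{Im}(\psi))\,\leq\,0$; these are incompatible, so $\mathcal F(D_i)=0$ and $D_i$ is flat. The technical heart, and the step I expect to require the most care, is the behaviour of semistability under the tensor and exterior-power operations in the non-K\"ahler Gauduchon setting, which underlies both $\mu(\text{End}(W))=0$ and the bound $\mu_{\min}(\bigwedge^2 TX)\geq 2\,\mu_{\min}(TX)$. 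Over K\"ahler manifolds these follow from the Kobayashi--Hitchin correspondence; for a general Gauduchon metric one must appeal to its extension and check that the degree used there agrees with the one fixed in \S 3. Everything else---the identification of the curvature with a section of $\Omega^2_X\otimes\text{End}(W)$ and the two elementary slope inequalities for $\text{Im}(\psi)$---is routine once these structural facts are in place.
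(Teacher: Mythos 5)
Your proof is correct, and it reaches the conclusion by a genuinely different reduction than the paper's. The paper does not treat the curvature as a single homomorphism out of $\bigwedge^2 TX$; instead it contracts ${\mathcal K}(D_i)$ with one global holomorphic vector field $v$ at a time, obtaining for each $v$ a homomorphism $TX\,\longrightarrow\,{\rm End}(E_i/E_{i-1})$, kills that homomorphism by the same slope comparison you use (Lemma \ref{lem1} gives $\mu_{\min}(TX)\,=\,\mu(TX/T_\ell)\,>\,0$, while any subsheaf of the semistable degree-zero sheaf ${\rm End}(E_i/E_{i-1})$ has slope $\leq 0$), and then concludes ${\mathcal K}(D_i)\,=\,0$ because global vector fields generate $TX$ over a Zariski open subset. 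The trade-off is exactly where you located it: your route needs the extra structural input $\mu_{\min}(\bigwedge^2 TX)\,\geq\, 2\,\mu_{\min}(TX)$, which in the Gauduchon setting rests on semistability of tensor products (via the approximate Hermitian--Einstein characterization of \cite{NZ}, the same tool the paper already invokes for ${\rm End}(E_i/E_{i-1})\,=\,W\otimes W^*$) together with $\bigwedge^2 TX$ being a quotient of $TX\otimes TX$; in exchange you never use the almost-homogeneity of $X$ (generic generation of $TX$ by global vector fields) inside this proposition, only the numerical conclusion of Lemma \ref{lem1}. The paper's contraction trick keeps the semistability-of-tensor-products machinery to the minimum already required for ${\rm End}(W)$ and leans instead on the geometry of LVMB manifolds, which is freely available here. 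Both arguments are sound; yours is marginally more portable to any compact complex manifold with $\mu_{\min}(TX)\,>\,0$, while the paper's is lighter on sheaf-theoretic input.
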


\begin{proof}
A holomorphic vector bundle $V$ on $X$ is semistable if and only if $V$ admits
an approximate Hermitian--Einstein structure \cite[p.~629, Theorem 1.1]{NZ} (set
the Higgs field $\phi$ in \cite[Theorem 1.1]{NZ} to be zero). If
$V_1$ and $V_2$ are semistable holomorphic vector bundles on $X$, then
approximate Hermitian--Einstein structures on $V_1$ and $V_2$ together produce
an approximate Hermitian--Einstein structure on $V_1\otimes V_2$. Hence
$V_1\otimes V_2$ is also semistable.

Since $E_i/E_{i-1}$ is semistable, from the above observation we conclude that
${\rm End}(E_i/E_{i-1})$ is also semistable. Note that we have
$$\text{degree}({\rm End}(E_i/E_{i-1}))\,=\, 0\, ,$$
because $\det {\rm End}(E_i/E_{i-1})\,=\, {\mathcal O}_X$.

Let
$$
{\mathcal K}(D_i)\, \in\, H^0(X,\, \Omega^2_X\otimes {\rm End}(E_i/E_{i-1}))
$$
be the curvature of the holomorphic connection $D_i$ on $E_i/E_{i-1}$. For any
holomorphic vector field $v$ on $X$, the contraction
$i_v {\mathcal K}(D_i)$ of this curvature form gives a holomorphic homomorphism
$$
\widetilde{i_v {\mathcal K}(D_i)}\, :\, TX\, \longrightarrow\, {\rm End}(E_i/E_{i-1})\, .
$$
Now, since ${\rm End}(E_i/E_{i-1})$ i semistable of degree zero, from Lemma \ref{lem1}
it follows immediately that there is no nonzero holomorphic homomorphism from
$TX$ to ${\rm End}(E_i/E_{i-1})$. Hence the above homomorphism
$\widetilde{i_v {\mathcal K}(D_i)}$ vanishes identically.
Since global holomorphic vector fields generate $TX$ over a Zariski open
subset, we conclude that ${\mathcal K}(D_i)\,=\, 0$.
\end{proof}

The following is an immediate consequence of Proposition \ref{prop2}.

\begin{corollary}\label{cor2}
Let $E$ be a semistable holomorphic vector bundle on $X$, and let $D$ be a holomorphic
connection on $E$. Then $D$ is flat.
\end{corollary}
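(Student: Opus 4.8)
The plan is to obtain this as the degenerate case $b\,=\,0$ of the analysis already carried out for the Harder--Narasimhan filtration \eqref{e21}. By definition, $E$ is semistable exactly when $b\,=\,0$, in which case \eqref{e21} collapses to $0\,=\,E_0\,\subset\, E_1\,=\, E$. The single associated graded piece is then $E_1/E_0\,=\, E$ itself, and the connection it inherits from $D$ is $D$ itself. Thus the assertion is precisely the flatness statement of Proposition \ref{prop2} applied to this unique quotient, and no argument is needed beyond identifying $E$ with its own associated graded piece.

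Should one want a self-contained derivation that does not route through the filtration bookkeeping, I would replay the proof of Proposition \ref{prop2} verbatim with $E$ in the role of $E_i/E_{i-1}$. Concretely: since $E$ is semistable, the tensor-product stability argument recalled in Proposition \ref{prop2} (using approximate Hermitian--Einstein structures, \cite[Theorem 1.1]{NZ}) shows that ${\rm End}(E)\,=\, E\otimes E^\ast$ is semistable, and $\det {\rm End}(E)\,=\,{\mathcal O}_X$ forces ${\rm degree}({\rm End}(E))\,=\,0$. Let ${\mathcal K}(D)\, \in\, H^0(X,\, \Omega^2_X\otimes {\rm End}(E))$ be the curvature of $D$. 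For each global holomorphic vector field $v$ the contraction $i_v{\mathcal K}(D)$ yields a holomorphic homomorphism $TX\,\longrightarrow\, {\rm End}(E)$; since ${\rm End}(E)$ is semistable of degree zero, Lemma \ref{lem1} rules out any nonzero such homomorphism, so every contraction $i_v{\mathcal K}(D)$ vanishes identically. Because global holomorphic vector fields span $TX$ over a Zariski-dense open set (Theorem \ref{thLVMB}), a curvature $2$-form annihilated by contraction with a spanning family of vectors must itself vanish, whence ${\mathcal K}(D)\,=\,0$ and $D$ is flat.

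I do not expect any genuine obstacle, since the essential input---the nonexistence of nonzero homomorphisms $TX\,\to\, W$ for $W$ semistable of degree $0$, guaranteed by the positivity ${\rm degree}(TX/T_\ell)\,>\,0$ of Lemma \ref{lem1}---is exactly what powers Proposition \ref{prop2}. The only point demanding a moment's care is the bookkeeping at the top of the filtration: one must confirm that the associated graded piece $E_{b+1}/E_b$, which equals $E$ when $b\,=\,0$, is indeed among the quotients to which the flatness conclusion of Proposition \ref{prop2} applies. Once that is granted, the corollary is immediate.
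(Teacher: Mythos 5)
Your proof is correct and takes the same route as the paper, which simply declares the corollary an immediate consequence of Proposition \ref{prop2} via the $b\,=\,0$ collapse of the Harder--Narasimhan filtration, so that $E\,=\,E_1/E_0$ is the unique graded piece and the induced connection is $D$ itself. Your caveat about the top quotient is well taken---the paper's stated index range $1\,\leq\, i\,\leq\, b$ for the pieces $E_i/E_{i-1}$ is off by one (it should be $1\,\leq\, i\,\leq\, b+1$ to include $E_{b+1}/E_b$, and in particular to be nonvacuous when $b\,=\,0$)---but your fallback of rerunning the Proposition \ref{prop2} argument verbatim on $E$ closes that gap.
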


\end{document}